\numberwithin{equation}{section}
\theoremstyle{plain}
\newtheorem{theorem}{Theorem}[section]
\theoremstyle{definition}
\theoremstyle{remark}
\newtheorem*{remark}{Remark}
\newcommand{\maxp}{\text {\rm maxp}}
\newcommand{\N}{\mathbb{N}}
\newcommand{\la}{\lambda}
\def\({\left(}
\def\){\right)}
\begin{document}
\title[Maximal multiplicative properties of partitions]{Maximal multiplicative properties of partitions}
\author{Christine Bessenrodt and Ken Ono}
\address{Faculty of Mathematics and Physics,
Leibniz University Hannover,
Welfengarten 1,
D-30167 Hannover,
Germany}
\email{bessen@math.uni-hannover.de}
\address{Department of Mathematics,  Emory University,
Atlanta, GA. 30322}
\email{ono@mathcs.emory.edu}
\thanks{The second author thanks the NSF and the Asa Griggs Candler Fund for their generous support.}
\begin{abstract}
Extending the partition function multiplicatively to
a function on partitions, we show that it has a unique maximum
at an explicitly given partition for any $n\neq 7$.
The basis for this is an inequality for the partition
function which seems not to have been noticed before.
\end{abstract}
\maketitle

\section{Introduction and statement of results}\label{sec:intro}

For $n\in \N$, the partition function $p(n)$ enumerates the number
of partitions of $n$, i.e., the number of integer sequences
$\la=(\la_1,\la_2,\ldots )$ with
$\la_1\geq \la_2 \geq \ldots >0$ and $\sum_{j\geq 1} \la_j =n$.
It plays a central role in many parts of mathematics
and has been for centuries an object whose properties have been
studied in particular in combinatorics and number theory.

While explicit formulae for $p(n)$ are known due to
the work of Hardy, Ramanujan and Rademacher, and the recent work
of Bruinier and the second author \cite{BruinierOno} on a finite algebraic formula,
these expressions can be quite forbidding when
one wants to check even simple properties.
In a representation theoretic context, a question
came up which led to the observation of surprisingly nice
multiplicative behavior.

In this note, we show in Theorem~\ref{thm1}
the following inequality:
\medskip

\centerline{
For any integers $a,b$ such that  $a,b >1$ and $a+b>9$, we have
$p(a)p(b)>p(a+b)$.}
\medskip

This result allows us to study an ``extended partition function",
which is obtained by defining for  a partition $\mu=(\mu_1,\mu_2,\ldots)$:
$$
p(\mu)=\prod_{j\ge 1} p(\mu_j).
$$
Let $P(n)$ denote the set of all partitions of~$n$.
Here we determine
the maximum of the partition function on $P(n)$
explicitly; more precisely, we find in Theorem~\ref{thm2}
that for $n\neq 7$, the maximal value
$$\maxp(n)=\max(p(\mu) \mid \mu \in P(n))$$
is attained at a unique partition of $n$ of a very simple form,
which depends on $n$ modulo~$4$.

\begin{theorem}\label{thm2}
Let $n\in \N$. For $n\geq 4$ and $n\neq 7$,
the maximal value $\maxp(n)$
of the partition function on $P(n)$
is attained at the partition
$$
\begin{array}{ll}
(4,4,4,\ldots,4,4) & \text{when } n \equiv 0 \pmod 4\\
(5,4,4,\ldots,4,4) & \text{when } n \equiv 1 \pmod 4\\
(6,4,4,\ldots,4,4) & \text{when } n \equiv 2 \pmod 4\\
(6,5,4,\ldots,4,4) & \text{when } n \equiv 3 \pmod{4}
\end{array}
$$
In particular, if $n\geq 8$, then
$$
\maxp(n)=\begin{cases}
5^{\frac{n}{4}}\ \ \ \ \ \ \ \
 & \text {\rm if $n\equiv 0\pmod 4$},\\
7\cdot 5^{\frac{n-5}{4}}\ \ \ \ \ \ \ \ &\text {\rm if $n\equiv 1\pmod 4$},\\
11\cdot 5^{\frac{n-6}{4}}\ \ \ \ \ \ \ \ &\text {\rm if $n\equiv 2\pmod 4$},\\
11\cdot7\cdot 5^{\frac{n-11}{4}}\ \ \ \ \ \ \ \ &\text {\rm if $n\equiv 3\pmod 4$}.
\end{cases}
$$
\end{theorem}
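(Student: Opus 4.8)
The plan is to transform an arbitrary maximizing partition of $n$ into the claimed one by a finite list of weight-increasing \emph{local moves}: each move replaces a few parts by others of the same total size but with strictly larger $p$-product, invoking Theorem~\ref{thm1} for the large parts and the explicit values $p(1),\dots,p(9)=1,2,3,5,7,11,15,22,30$ for the small ones. First I would dispose of the inefficient parts. Since $p$ is non-decreasing, a part $1$ can be absorbed into any other part, so for $n\ge 2$ no maximizing partition has a part $1$. Any part $m\ge 10$ can be written as $m=a+b$ with $a,b>1$, whence $p(a)p(b)>p(m)$ by Theorem~\ref{thm1}; the two remaining large parts are handled by $p(4)^2=25>22=p(8)$ and $p(4)p(5)=35>30=p(9)$. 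Thus every maximizing partition has all parts in $\{2,3,4,5,6,7\}$.

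Next I would remove the parts $2,3,7$. A part $2$ cannot coexist with any admissible part, since $p(2)^2<p(4)$, $p(2)p(3)<p(5)$, $p(2)p(4)<p(6)$, $p(2)p(5)<p(3)p(4)$, $p(2)p(6)<p(4)^2$, and $p(2)p(7)<p(4)p(5)$; hence a maximizer with a $2$ would be $(2)$, excluded by $n\ge4$. A part $7$ is likewise isolated by $p(7)p(4)<p(6)p(5)$, $p(7)p(5)<p(6)^2$, $p(7)p(6)<p(4)^2p(5)$ and $p(7)^2<p(6)p(4)^2$, so a maximizer with a $7$ is $(7)$, i.e.\ $n=7$. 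For a part $3$ the comparisons $p(3)^2<p(6)$, $p(3)p(5)<p(4)^2$, $p(3)p(6)<p(4)p(5)$, $p(3)p(7)<p(4)p(6)$ leave only the configuration ``one $3$ alongside $4$'s''; here $p(3)p(4)=15=p(7)$ is merely an \emph{equality}, so one needs the length-three move $p(3)p(4)^2=75<77=p(6)p(5)$ to exclude two or more $4$'s, leaving only $(3)$ and $(3,4)$, i.e.\ $n\in\{3,7\}$. This equality is precisely why $n=7$, where $(4,3)$ and $(7)$ tie for the maximum, must be excluded.

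Consequently, for $n\ge 4$ with $n\ne 7$ every maximizer has all parts in $\{4,5,6\}$, and the two strict moves $p(5)^2=49<55=p(4)p(6)$ and $p(6)^2=121<125=p(4)^3$ show there is at most one $5$ and at most one $6$. Writing $s,t\in\{0,1\}$ for the numbers of $5$'s and $6$'s and $f$ for the number of $4$'s, the relation $4f+5s+6t=n$ forces $s+2t\equiv n\pmod 4$, whose unique solution is $(s,t)=(0,0),(1,0),(0,1),(1,1)$ according as $n\equiv 0,1,2,3$; the count $f$ is then determined, so that every maximizer equals one of the four partitions claimed, which yields both the form and the uniqueness. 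Inserting the values of $p$ gives the closed forms for $\maxp(n)$ once $n\ge8$.

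The main obstacle is not any individual inequality — each is an elementary comparison of known values of $p$ — but the bookkeeping needed to guarantee that \emph{every} non-target partition admits one of the listed \emph{strict} moves, so that the maximum is genuinely unique. The delicate points are the two numerical coincidences $p(3)p(4)=p(7)$ and $p(2)p(6)=p(8)$: the second is harmless because $(2,6)$ and $(8)$ are both beaten by $(4,4)$, whereas the first is actually realized and produces the single genuine exception at $n=7$.
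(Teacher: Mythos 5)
Your proposal is correct and follows essentially the same strategy as the paper: Theorem~\ref{thm1} eliminates large parts, and a finite list of weight-preserving, $p$-increasing local replacements (all of which I checked are numerically valid) forces the maximizer into the claimed form. The only difference is organizational — the paper verifies $n\le 14$ by a precomputed table and runs the replacement argument for $n>14$, whereas you treat all $n\ge 4$, $n\ne 7$ uniformly by showing the parts $2,3,7$ cannot coexist with anything, which also isolates the $p(3)p(4)=p(7)$ coincidence behind the $n=7$ exception.
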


\section{An analytic result on the partition function}

The main result of this section is
the following analytic inequality for the partition function~$p(n)$.
\begin{theorem}\label{thm1}
If $a,b$ are integers with $a,b>1$ and $a+b>8$, then
$$
p(a)p(b)\geq p(a+b).
$$
with equality holding only for $\{a,b\}= \{2,7\}$.
\end{theorem}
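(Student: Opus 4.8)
The plan is to reduce the claim to an inequality governed by the Hardy--Ramanujan growth $\log p(n)\sim \kappa\sqrt n$ with $\kappa=\pi\sqrt{2/3}$, and to exploit the strict superadditivity $\sqrt a+\sqrt b>\sqrt{a+b}$. After assuming $a\le b$ (so that $2\le a\le b$ and $a+b\ge 9$), I would rewrite the assertion in the equivalent ratio form $p(a+b)/p(b)\le p(a)$, aiming to show that the only case of equality is $(a,b)=(2,7)$, where indeed $p(9)/p(7)=30/15=2=p(2)$.

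The analytic engine I would use is a pair of effective two-sided bounds of matched shape: there exist an integer $n_0$ and constants $C_1,C_2$ with $C_2/C_1<2$ such that
\[ \frac{C_1}{n}\,e^{\kappa\sqrt n}\;<\;p(n)\;<\;\frac{C_2}{n}\,e^{\kappa\sqrt n}\qquad(n\ge n_0). \]
Such bounds follow from the leading Rademacher term together with the fact that $n\,e^{-\kappa\sqrt n}p(n)$ converges to $\tfrac{1}{4\sqrt3}=0.1443\ldots$ and stays in a narrow band above $0.12$ for $n\ge 10$; numerically one may take $n_0=10$, $C_1=0.126$, $C_2=0.1444$, so that $C_2/C_1<1.15$. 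For $a,b\ge n_0$ these bounds give
\[ \frac{p(a)p(b)}{p(a+b)}\;>\;\frac{C_1^{2}}{C_2}\cdot\frac{a+b}{ab}\,e^{\kappa(\sqrt a+\sqrt b-\sqrt{a+b})}, \]
and since $\sqrt a+\sqrt b-\sqrt{a+b}=\tfrac{2\sqrt{ab}}{\sqrt a+\sqrt b+\sqrt{a+b}}\ge \tfrac12\sqrt a$ when $a\le b$, the right-hand side exceeds $\tfrac{C_1^{2}}{C_2\,a}\,e^{\kappa\sqrt a/2}$, which tends to infinity with $a$. Hence the inequality is strict, with enormous room, for all $a\ge a_1$ with an explicit small $a_1$ (around $15$ for the constants above).

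It remains to treat the finitely many residual multipliers $2\le a<a_1$, for which I would fix $a$ and let $b$ vary. For $b\ge n_0$ the same bounds yield
\[ \frac{p(a+b)}{p(b)}\;<\;\frac{C_2}{C_1}\,e^{\kappa(\sqrt{a+b}-\sqrt b)}\;<\;\frac{C_2}{C_1}\,e^{\kappa a/(2\sqrt b)}, \]
whose limit as $b\to\infty$ is $C_2/C_1<2\le p(a)$; solving $\frac{C_2}{C_1}e^{\kappa a/(2\sqrt b)}<p(a)$ produces an explicit threshold $B(a)$ (largest for $a=2$, where it is about $21$) beyond which the desired strict inequality holds. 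The remaining pairs with $2\le a<a_1$ and $a\le b\le B(a)$, together with the low range $b<n_0$, form a bounded region (here $a+b\le 25$ suffices), which I would verify directly from a table of $p(n)$, reading off that the sole equality is at $(2,7)$.

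The main obstacle is quantitative rather than structural: because $p(2)p(7)=p(9)$ holds with exact equality, the analytic comparison cannot afford to lose more than a factor close to $2$ in the worst case, so the decisive point is proving a two-sided estimate sharp enough to guarantee $C_2/C_1<2$ for a usable $n_0$ (equivalently, controlling the approach of $n\,e^{-\kappa\sqrt n}p(n)$ to $\tfrac1{4\sqrt3}$, or invoking an effective form of the Rademacher expansion). Once that is in hand, keeping the thresholds $a_1$ and $B(a)$ small enough that the concluding finite check stays within a modest range is routine bookkeeping.
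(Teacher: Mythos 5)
Your strategy is in substance the paper's own: squeeze $p(n)$ between two expressions of the shape $(\mathrm{const}/n)e^{\kappa\sqrt n}$, use the superadditivity $\sqrt a+\sqrt b>\sqrt{a+b}$ to win outright for all large $a$, push each remaining small $a$ out to an explicit threshold in $b$, and dispose of the resulting finite set of pairs by direct computation. The paper obtains its two-sided bound
$\frac{\sqrt3}{12n}\bigl(1-\frac{1}{\sqrt n}\bigr)e^{\mu(n)}<p(n)<\frac{\sqrt3}{12n}\bigl(1+\frac{1}{\sqrt n}\bigr)e^{\mu(n)}$, $\mu(n)=\frac{\pi}{6}\sqrt{24n-1}$,
from Lehmer's explicit error estimate, valid for every $n\ge 1$; it then writes $b=\lambda a$ and uses monotonicity in $\lambda$ of the two sides to reduce to $\lambda=1$, which settles $a\ge 9$ and yields thresholds $\lambda_a$ for $2\le a\le 8$. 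Your replacement of the monotonicity step by the identity $\sqrt a+\sqrt b-\sqrt{a+b}=2\sqrt{ab}/(\sqrt a+\sqrt b+\sqrt{a+b})\ge\tfrac12\sqrt a$ (for $a\le b$) is equally effective, and your threshold bookkeeping $B(a)$ plays exactly the role of the paper's $\lambda_a$.

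Two points, however, need repair before this is a proof. First, the two-sided bound with explicit constants is the entire analytic content of the theorem, and it does not follow from the assertion that $n\,e^{-\kappa\sqrt n}p(n)$ \emph{converges} to $1/(4\sqrt3)$: convergence says nothing about any particular $n$, so ``stays in a narrow band above $0.12$ for $n\ge 10$'' is itself an unproved effective statement. You correctly identify at the end that an effective form of the Rademacher expansion is what is needed; that is precisely Lehmer's theorem, and it must be invoked (or reproved) rather than gestured at. Second, your specific constant is numerically false: at $n=11$ one has $11\,p(11)\,e^{-\kappa\sqrt{11}}=616\,e^{-8.5074\ldots}\approx 0.1244<0.126$, so $C_1=0.126$ already fails at $n_0+1$. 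Lowering $C_1$ to, say, $0.124$ keeps $C_2/C_1<1.17$ and leaves the architecture intact, at the cost of slightly larger thresholds $a_1$ and $B(a)$ (the concluding finite check then extends a little past $a+b\le 25$; e.g.\ $a=14$ requires checking $b$ up to about $15$). With Lehmer's bound supplying the constants and the numerics corrected, your argument goes through and coincides with the paper's.
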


\begin{remark} Of course, the inequality in Theorem~\ref{thm1} always fails if we
take $a=1$. The complete set of pairs of integers $1<a\leq b$ for which
the inequality fails is
$$
\left \{ (2,2), (2,3), (2,4), (2,5), (3,3),  (3,5)\right \},
$$
while for
$$
\left \{ (2,6), (3,4)\right \}
$$
we have equality.
\end{remark}

The main tool for deriving Theorem~\ref{thm1} is the following classical result
of D. H. Lehmer~\cite{Lehmer}.

\begin{theorem}[Lehmer]\label{LehmerThm}
If $n$ is a positive integer and $\mu=\mu(n):=\frac{\pi}{6}\sqrt{24n-1}$,
then
$$
p(n)=\frac{\sqrt{12}}{24n-1}\cdot \left [\left(1-\frac{1}{\mu}\right)e^{\mu}+
\left(1+\frac{1}{\mu}\right)e^{-\mu}\right]+E(n),
$$
where we have that
$$
|E(n)|<\frac{\pi^2}{\sqrt{3}}\cdot \left [ \frac{1}{\mu^3}\sinh(\mu)+\frac{1}{6}-\frac{1}{\mu^2}\right].
$$
\end{theorem}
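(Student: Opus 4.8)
The plan is to derive Lehmer's formula from the Hardy--Ramanujan--Rademacher exact convergent series for $p(n)$, isolating its first term as the main term and estimating the remaining tail. I take as input the classical exact formula (proved via the circle method)
\[
p(n)=\frac{2\pi}{(24n-1)^{3/4}}\sum_{k=1}^{\infty}\frac{A_k(n)}{k}\,I_{3/2}\!\left(\frac{\mu}{k}\right),
\]
where $\mu=\frac{\pi}{6}\sqrt{24n-1}$, $I_{3/2}$ is the modified Bessel function, and $A_k(n)=\sum_{0\le h<k,\ (h,k)=1}\omega_{h,k}\,e^{-2\pi i n h/k}$ is the Kloosterman-type sum with $|\omega_{h,k}|=1$, so that $|A_k(n)|\le\phi(k)\le k$ and $A_1(n)=1$. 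Deriving this series is the deep classical input, which I would cite rather than reprove.

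First I would substitute the elementary closed form $I_{3/2}(z)=\sqrt{\frac{2}{\pi z}}\bigl(\cosh z-\frac{\sinh z}{z}\bigr)$, valid for half-integral order. Applied to the $k=1$ term (where $A_1(n)=1$), together with the identity $24n-1=\frac{36\mu^2}{\pi^2}$, a direct computation collapses the constants: the prefactor becomes exactly $\frac{2\sqrt{12}}{24n-1}$, and using $\cosh\mu-\frac{\sinh\mu}{\mu}=\frac12\bigl[e^{\mu}(1-\frac1\mu)+e^{-\mu}(1+\frac1\mu)\bigr]$ one recovers precisely the main term of the theorem. Thus $E(n)=\sum_{k\ge2}\frac{2\pi}{(24n-1)^{3/4}}\frac{A_k(n)}{k}I_{3/2}(\mu/k)$, and the whole problem reduces to estimating this tail.

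For the tail I would expand the difference in a power series, $\cosh z-\frac{\sinh z}{z}=\sum_{m\ge1}\frac{z^{2m}}{(2m+1)(2m-1)!}\ge0$. Applying $|A_k(n)|\le k$ and the same constant bookkeeping, the prefactor $\frac{2\sqrt{12}}{24n-1}$ rewrites as $\frac{\pi^2}{\sqrt3}\cdot\frac{1}{3\mu^2}$, and interchanging the sums over $k$ and $m$ (both absolutely convergent) yields
\[
|E(n)|\le\frac{\pi^2}{\sqrt3}\sum_{m\ge1}\frac{\mu^{2m-2}}{3(2m+1)(2m-1)!}\bigl(\zeta(2m-\tfrac12)-1\bigr),
\]
since $\sum_{k\ge2}k^{-(2m-1/2)}=\zeta(2m-\tfrac12)-1$. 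To match the stated bound I would compare this term-by-term with $\frac{\pi^2}{\sqrt3}\bigl[\frac16+\sum_{m\ge1}\frac{\mu^{2m-2}}{(2m+1)!}\bigr]$, which equals $\frac{\pi^2}{\sqrt3}\bigl[\frac{\sinh\mu}{\mu^3}+\frac16-\frac1{\mu^2}\bigr]$ because $\sum_{m\ge1}\frac{\mu^{2m-2}}{(2m+1)!}=\frac{\sinh\mu}{\mu^3}-\frac1{\mu^2}$. After cancelling the common factor $\frac1{(2m+1)(2m-1)!}$, the required per-term inequality reduces to $\frac{2m}{3}\bigl(\zeta(2m-\tfrac12)-1\bigr)\le1$ for all $m\ge1$.

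The main obstacle is this last estimate together with the precise constant bookkeeping. The inequality $\frac{2m}{3}(\zeta(2m-\frac12)-1)\le1$ holds with room to spare — at $m=1$ the left side is about $0.41$ and it decays like $m\,2^{-2m}$ — so it follows from a short monotonicity argument using $\zeta(s)-1\le 2^{-s}+\frac{2^{1-s}}{s-1}$; the leftover $\frac16$ then renders the final bound strict, consistent with $|A_k(n)|\le\phi(k)<k$ for $k\ge2$. The genuinely delicate points are keeping the constants $\sqrt{12}=2\sqrt3$ and the identity $24n-1=36\mu^2/\pi^2$ exact throughout, so that the prefactor lands on $\frac{\pi^2}{\sqrt3}$ precisely, and justifying the interchange of the two series.
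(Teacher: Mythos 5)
Note first that the paper does not prove this statement at all: it is quoted verbatim from Lehmer's 1939 paper \cite{Lehmer} as the classical input to Theorem~\ref{thm1}, so there is no in-paper proof to compare against. Your route --- isolating the $k=1$ term of Rademacher's convergent series via the closed form of $I_{3/2}$ and bounding the tail termwise --- is the standard (essentially Lehmer's own) analysis, and your constant bookkeeping in the main term is exactly right: $\sqrt{2/(\pi\mu)}=\sqrt{12}/\bigl(\pi(24n-1)^{1/4}\bigr)$ does collapse the prefactor to $\frac{2\sqrt{12}}{24n-1}$, and the hyperbolic identity recovers the stated main term. The series expansion $\cosh z-\frac{\sinh z}{z}=\sum_{m\ge1}\frac{z^{2m}}{(2m+1)(2m-1)!}$, the interchange of sums (all terms positive, so Tonelli suffices), and the reduction of the tail bound to the comparison $\frac{2m}{3}\bigl(\zeta(2m-\tfrac12)-1\bigr)\le 1$ are all correct as algebra.

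However, that final per-term inequality is \emph{false} at $m=1$, and your numerical check is where the error lies: $\zeta(3/2)=2.612\ldots$, not $1.61\ldots$, so $\frac{2}{3}\bigl(\zeta(3/2)-1\bigr)\approx 1.075>1$, not $0.41$. (Your bound $\zeta(s)-1\le 2^{-s}+\frac{2^{1-s}}{s-1}$ gives $\approx 1.77$ at $s=3/2$, which should have been a warning sign; it only certifies the inequality for $m\ge2$, where e.g.\ $\frac{4}{3}(\zeta(7/2)-1)\approx 0.169$ and the terms decay like $m\,2^{-2m}$.) The proof is nevertheless salvageable, because the failure is quantitatively small and occurs only in the one term where both sides are constants in $\mu$: your $m=1$ term is $\frac{1}{9}\bigl(\zeta(3/2)-1\bigr)\approx 0.1791$ against the matched $\frac{1}{3!}=0.1667$, an excess of about $0.0125$. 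The unmatched constant $\frac16$ in Lehmer's bound --- which you had earmarked merely to ``render the final bound strict'' --- must instead be spent absorbing this excess; since $0.0125<\frac16$, it does so with ample room, and the leftover slack (or, if you prefer, $|A_k(n)|\le\phi(k)\le k-1$ for $k\ge2$) still yields the strict inequality. So: right decomposition, right identities, one wrong zeta value that invalidates the per-term comparison as stated at $m=1$, fixable by rerouting the $\frac16$.
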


\begin{proof}[Proof of Theorem~\ref{thm1}]
By Theorem~\ref{LehmerThm}, it is straightforward to verify for every positive integer~$n$ that
$$
\frac{\sqrt{3}}{12n}\left(1-\frac{1}{\sqrt{n}}\right)e^{\mu(n)}<p(n)<
\frac{\sqrt{3}}{12n}\left(1+\frac{1}{\sqrt{n}}\right)e^{\mu(n)}.
$$
We may assume $1<a\leq b$;
for convenience we let $b=\lambda a$, where $\lambda\geq 1$.
These inequalities immediately give:
\begin{displaymath}
\begin{split}
p(a)p(\lambda a) & > \frac{1}{48\lambda a^2}\left(1-\frac{1}{\sqrt{a}}\right)\left(1-
\frac{1}{\sqrt{\lambda a}}\right)\cdot e^{\mu(a)+\mu(\lambda a)},\\
p(a+\lambda a) & < \frac{\sqrt{3}}{12(a+\lambda a)}\left(1+\frac{1}{\sqrt{a+\lambda a}}\right)
e^{\mu(a+\lambda a)}.
\end{split}
\end{displaymath}
For all but finitely many cases, it suffices to find conditions on $a>1$ and $\lambda\geq 1$
for which
$$
\frac{1}{48\lambda a^2}\left(1-\frac{1}{\sqrt{a}}\right)\left(1-\frac{1}{\sqrt{\lambda a}}\right)e^{\mu(a)+\mu(\lambda a)}>
\frac{\sqrt{3}}{12(a+\lambda a)}\left(1+\frac{1}{\sqrt{a+\lambda a}}\right)e^{\mu(a+\lambda a)}.
$$
Since $\lambda\geq 1$, we have that $\lambda/(\lambda+1)\geq 1/2$, and so it suffices to consider when
$$
e^{\mu(a)+\mu(\lambda a)-\mu(a+\lambda a)}>
2a\sqrt{3}\cdot S_a(\lambda),
$$
where
\begin{equation}\label{Sa}
S_a(\lambda):=\frac{1+\frac{1}{\sqrt{a+\lambda a}}}{\left(1-\frac{1}{\sqrt{a}}\right)\left(1-\frac{1}{\sqrt{\lambda a}}\right)}.
\end{equation}
By taking the natural log, we obtain the inequality
\begin{equation}\label{condition}
T_a(\lambda)> \log(2a\sqrt{3})+\log(S_a(\lambda)),
\end{equation}
where
we have that
\begin{equation}\label{Ta}
T_a(\lambda):=\frac{\pi}{6}\left(\sqrt{24a-1}+\sqrt{24\lambda a-1}-\sqrt{24(a+\lambda a)-1}\right).
\end{equation}
We consider (\ref{Sa}) and (\ref{Ta}) as functions in $\lambda\geq 1$ and fixed $a>1$. Simple calculations reveal
that $S_a(\lambda)$ is decreasing in $\lambda\geq 1$, while $T_a(\lambda)$ is increasing in $\lambda\geq 1$.
Therefore, (\ref{condition}) becomes
$$
T_a(\lambda)\geq T_a(1) > \log(2a\sqrt{3}) +\log(S_a(1))\geq \log(2a\sqrt{3})+\log(S_a(\lambda)).
$$
By evaluating $T_a(1)$ and $S_a(1)$ directly, one easily finds that (\ref{condition}) holds whenever $a\geq 9$.
To complete the proof, assume that $2\leq a \leq 8$. We then directly calculate the real number $\lambda_a$ for which
$$
T_a(\lambda_a) = \log(2a\sqrt{3})+\log(S_a(\lambda_a)).
$$
By the discussion above, if $b=\lambda a\geq a$ is an integer for which $\lambda > \lambda_a$, then (\ref{condition}) holds,
which in turn gives the theorem in these cases.
The table below gives the numerical calculations for these $\lambda_a$.
\medskip
\begin{table}[h]
\caption{\label{table:2} Values of $\lambda_a$}
\begin{tabular}{|r|cc| }
\hline \rule[-3mm]{0mm}{8mm}
$a$ && $\lambda_a$\\
\hline
$2$  && $57.08\dots$ \\
$3$  && $7.42\dots$ \\
$4$  && $3.62\dots$ \\
$5$ && $2.36\dots$ \\
$6$&&  $1.74\dots$ \\
$7$&& $1.38\dots$\\
$8$&& $1.15\dots$ \\
\hline
\end{tabular}
\end{table}
\medskip
\noindent
Only finitely many cases remain, namely the pairs of integers
where $2\leq a\leq 8$ and $1\leq b/a \leq \lambda_a$.
We computed $p(a), p(b)$ and $p(a+b)$ for these
cases to complete the proof of the theorem.
\end{proof}

\section{The maximum property}\label{sec:max}

Here we use the result in the previous section to prove Theorem~\ref{thm2}.

\begin{proof}[Proof of Theorem~\ref{thm2}]
For the proof, we will need the partitions where $\maxp(n)$ is attained
for $n\leq 14$;
these are given in Table~2 below (computed by Maple).
\medskip

\begin{table}[h]
\caption{\label{table:4} Maximum value partitions $\mu$}
\begin{tabular}{|r|cc|cc|cc| }
\hline \rule[-3mm]{0mm}{8mm}
$n$ && $p(n)$ && \maxp(n) && $\mu$ \\
\hline
$1$  && 1 && 1 && (1) \\
$2$  && 2 && 2 && (2) \\
$3$  && 3 && 3 && (3) \\
$4$  && 5 && 5 && (4) \\
$5$ && 7 && 7 && (5) \\
$6$&&  11 && 11 && (6) \\
$7$&& 15 && 15 && (7), (4,3)\\
$8$&& 22 && 25 && (4,4) \\
$9$&& 30 && 35 && (5,4) \\
$10$&& 42  && 55 && (6,4) \\
$11$&& 56 && 77 && (6,5) \\
$12$&& 77 && 125 && (4,4,4) \\
$13$&& 101 && 175 && (5,4,4) \\
$14$&& 135 && 275 && (6,4,4) \\
\hline
\end{tabular}
\end{table}
\medskip

\noindent
We see that the assertion holds for $n\leq 14$, and we may thus assume now that $n>14$.
Let $\mu=(\mu_1,\mu_2,\ldots)\in P(n)$ be such that $p(\mu)$ is maximal.
If $\mu$ has a part $k\geq 8$, then by Theorem~\ref{thm1} and Table~2,
replacing $k$ by the parts $\lfloor \frac k2 \rfloor, \lceil \frac k2 \rceil$ would produce a partition $\nu$ such that $p(\nu)>p(\mu)$.
Thus all parts of $\mu$ are smaller than~$8$.
Let $m_j$ be the multiplicity of a part $j$ in $\mu$.
If $m_1\ne 0$, then for $\nu=(\mu_1+1,\mu_2,\ldots)$
we have $p(\nu)>p(\mu)$.
So $m_1=0$.
If $m_2\geq 2$, then replacing parts $2,2$ in $\mu$ by one part $4$ gives a partition $\nu$ with $p(\nu)>p(\mu)$. So $m_2\le 1$.
Similarly, the operations of replacing $(3,3)$ by a part $6$,
$(5,5)$ by the parts $(6,4)$, $(6,6)$ by the parts $(4,4,4)$,
and $(7,7)$ by the parts $(6,4,4)$, respectively,
show that we have $m_3, m_5, m_6, m_7 \le 1$.

Now assume that $m_7=1$.
As $n>14$, one of $m_2, m_3, m_4, m_5, m_6$ is nonzero;
now by  Table~2, performing one of the following replacement operations
$$
7,2 \to 5,4; \; 7,3 \to 6,4;\; 7,4 \to 6,5;\;  7,5 \to 4,4,4;\; 7,6 \to 5,4,4
$$
gives a partition $\nu$ with $p(\nu)>p(\mu)$, a contradiction.
Hence $m_7=0$.

Next assume that $m_6=1$.
If $m_2=1$ or $m_3=1$, we can use the following operations that increase
the $p$-value:
$$ 6,2 \to 4,4; \; 6,3 \to 5,4 \:.$$
By the choice of $\mu$, we conclude that $m_2=0=m_3$.
Thus in this case we have
$\mu=(6,4,4,\ldots,4,4)$ or $\mu=(6,5,4,4,\ldots,4,4)$,
which is in accordance with the assertion.

Thus we may now assume $m_6=0$.
Assume first that $m_5=1$. Note that we must have a part 4, since $n>14$.
If $m_2=1$ or $m_3=1$, we can increase
the $p$-value by the replacements:
$$ 5,4,2 \to 6,5;\; 5,3 \to 4,4 \:.$$
By the choice of $\mu$, this implies $m_2=0=m_3$.
Thus we have in this case
$\mu=(5,4,4,\ldots,4,4)$, again in accordance with the assertion.

Now we consider the case where also $m_5=0$.
If $m_2=1$ or $m_3=1$, we use the replacements
$$ 4,2 \to 6; \; 4,4,3 \to 6,5 $$
to get a contradiction.
Hence $m_2=0=m_3$ and we have the final case
$\mu=(4,4,\ldots,4,4)$ occurring in the assertion.

The four types of partitions we have found occur at
the four different congruence classes of~$n \mod 4$;
thus for each value of $n\neq 7$, we have found
a unique partition $\mu$ that maximizes the $p$-value
and we are done.
\end{proof}

\section{Concluding remarks}

We note that recently also other multiplicative properties of the partition function have been studied.
Originating in a conjecture by William Chen, DeSalvo and Pak have proved
log-concavity for the partition function for all $n>25$;
indeed, they have shown that for all $n>m>1$ the following holds:
$$p(n)^2 > p(n-m)p(n+m)\:.$$
Note that the border case $m=n$ (which is not covered here)
is included in our results, for any $n>3$.
\medskip

The opposite inequality
$$p(1)p(n) < p(n+1)$$
has an easy combinatorial proof by an injection $P(n) \to P(n+1)$.
One may ask whether there is also a combinatorial argument for
proving Theorem~\ref{thm1}.
\medskip

The behavior that we have seen here for the partition function $p(n)$
seems to be shared also by the enumeration of other
sets of suitably restricted partitions; work on this is in progress.

\end{document}